\newcommand\reallywidehat[1]{%
\savestack{\tmpbox}{\stretchto{%
  \scaleto{%
    \scalerel*[\widthof{\ensuremath{#1}}]{\kern-.6pt\bigwedge\kern-.6pt}%
    {\rule[-\textheight/2]{1ex}{\textheight}}
  }{\textheight}%
}{0.5ex}}%
\stackon[1pt]{#1}{\tmpbox}%
}
\newcommand{\N}{\mathbb{N}}
\newcommand{\R}{\mathbb{R}}
\newcommand{\lcm}{\mathrm{lcm}}
\newcommand{\ord}{\mathrm{ord}}
\renewcommand{\P}{\mathbb{P}}
\theoremstyle{plain}
\newtheorem{theorem}{Theorem}
\newtheorem{lemma}[theorem]{Lemma}
\newtheorem{proposition}[theorem]{Proposition}
\newtheorem{corollary}[theorem]{Corollary}
\newtheorem{fact}[theorem]{Fact}
\theoremstyle{definition}
\newtheorem{remark}[theorem]{Remark}
\numberwithin{theorem}{section}
\title{The most probable order of a random permutation}
\author{Adrian Beker\footnote{University of Zagreb, Faculty of Science, Department of Mathematics, Zagreb,
Croatia. Email: \nolinkurl{adrian.beker@math.hr}}}
\date{\today}
\begin{document}

\maketitle

\begin{abstract}
    Given positive integers $n$ and $m$, let $p_n(m)$ be the probability that a uniform random permutation of $[n]$ has order exactly $m$. We show that, as $n \to \infty$, the maximum of $p_n(m)$ over all $m$ is asymptotic to $1/n$, the probability of an $n$-cycle. Furthermore, for sufficiently large $n$, we show that the maximum is attained precisely if $m$ is the least positive integer divisible by all positive integers less than or equal to $n-m$. This answers a question of Acan, Burnette, Eberhard, Schmutz and Thomas, originally attributed to work of Erdős and Turán from 1968.
\end{abstract}

\section{Introduction}\label{sec:intro}

Let $\pi_n$ be a permutation chosen uniformly at random from $S_n$, the symmetric group on $n$ letters. Let $\ord(\pi_n)$ denote the order of $\pi_n$, which can be computed as the least common multiple of the lengths of its cycles. Understanding the distribution of $\ord(\pi_n)$ is a fundamental problem in probabilistic group theory. Its study goes back more than a hundred years to the work of Landau \cite{landau}, which established that the maximum of its support, now known as Landau's function, is of the form $e^{(1+o(1))\sqrt{n\log n}}$. Later on, a rather systematic treatment of this subject was undertaken by Erdős and Turán. In a series of works starting in the 1960s, they established a number of results concerning the distribution of (the logarithm of) $\ord(\pi_n)$, including a weak law of large numbers \cite{erdos-turan-groups-i}, a central limit theorem \cite{erdos-turan-groups-iii} and a log-asymptotic for the size of the support \cite{erdos-turan-groups-iv}. For a more complete account of the literature on this and related topics, we recommend the reference \cite{ford}.

While the macroscopic behaviour of $\ord(\pi_n)$ is by now fairly well understood, obtaining local limit results has proved considerably more challenging. In this direction, Acan, Burnette, Eberhard, Schmutz and Thomas \cite{acan-burnette-eberhard-schmutz-thomas} recently studied the so-called collision entropy of $\ord(\pi_n)$. Letting $\pi_n'$ be an independent copy of $\pi_n$, they were interested in estimating the probability that $\ord(\pi_n)$ equals $\ord(\pi_n')$. They proved that, somewhat surprisingly, this quantity is not $O(1/n^2)$, the lower bound coming from the event that $\pi_n$ and $\pi_n'$ are both $n$-cycles (see also \cite{eberhard}). They also established the corresponding upper bound
\begin{equation}\label{eq:equal_order_prob}
    \P(\ord(\pi_n) = \ord(\pi_n')) \leq n^{-2+o(1)}.
\end{equation}
Writing $p_n$ for the probability mass function of $\ord(\pi_n)$, i.e.\ $p_n(m) \vcentcolon= \P(\ord(\pi_n) = m)$ for $m \in \N$, this can be recast as a statement about the $\ell^2$-norm of $p_n$, namely $\lVert p_n\rVert_{\ell^2(\N)} \leq n^{-1+o(1)}$. 

Motivated by a question of Erdős and Turán \cite[p.\ 414]{erdos-turan-groups-iv}, also reiterated by Acan et al.\ \cite[\S6]{acan-burnette-eberhard-schmutz-thomas}, we are instead interested in the maximum probability that $\ord(\pi_n)$ equals a particular value. Writing $M(n) \vcentcolon= \lVert p_n\rVert_{\ell^{\infty}(\N)}$ for this quantity, one readily observes that
\begin{equation}\label{eq:lower_bound_cycle}
    M(n) \geq p_n(n) \geq \P(\pi_n \text{ is an $n$-cycle}) = 1/n.
\end{equation}
In the other direction, since $\lVert p_n\rVert_{\ell^{\infty}(\N)} \leq \lVert p_n\rVert_{\ell^2(\N)}$, the results of \cite{acan-burnette-eberhard-schmutz-thomas} imply a bound of the form
\begin{equation}\label{eq:weaker_bound}
    M(n) \leq n^{-1+o(1)}.
\end{equation}
Our goal is to obtain sharper bounds on $M(n)$. To this end, it will be useful to recall the following setup from \cite{eberhard}. We define
\[K_n \vcentcolon= \bigl\{k \in \{0,1,\ldots,n-1\}\ \mid\ \lcm(1,2,\ldots,k) \mid n-k\bigr\}\]
and note the key property that for $k \in K_n$, the existence of a cycle of length $n-k$ in $\pi_n$ guarantees that $\ord(\pi_n) = n-k$. Note also that $K_n$ always contains $\{0,1\}$ and the prime number theorem implies that $\max K_n \ll \log n$. In particular, if $n$ is large enough, then $\pi_n$ can only contain one cycle of length $n-k$ with $k \in K_n$, and this happens with probability exactly $1/(n-k)$.

Our first result can be regarded as an anticoncentration estimate for $\ord(\pi_n)$ confirming that the lower bound \eqref{eq:lower_bound_cycle} on $M(n)$ is asymptotically tight. As a byproduct, we also obtain a structural description of those $m$ for which $p_n(m)$ is close to its maximum value.

\begin{theorem}
\label{thm:asymp_for_max}
We have the asymptotic $M(n) \sim 1/n$. Moreover, if $n$ is sufficiently large, then any $m$ such that $p_n(m) \geq 1/n$ is of the form $n-k$ for some $k \in K_n$.
\end{theorem}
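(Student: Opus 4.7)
My approach rests on the identity obtained by decomposing $\pi_n$ according to the length $L_1$ of the cycle containing the element $1$: since $L_1$ is uniform on $\{1, \ldots, n\}$ and the remaining $n - L_1$ points carry an independent uniform permutation $\sigma \in S_{n - L_1}$, summing over $L_1 = \ell$ yields
\[ n\,p_n(m) \;=\; \sum_{\ell \mid m,\; 1 \le \ell \le n} \P_{\sigma \in S_{n-\ell}}\!\bigl(\lcm(\ell, \ord(\sigma)) = m\bigr). \]
I will show the right-hand side is $1 + o(1)$ when $m = n - k$ with $k \in K_n$, and strictly less than $1$ otherwise for large $n$; this yields both halves of the theorem at once.

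First I would isolate the term $\ell = m$ (present only for $m \le n$), which equals $\P_{\sigma \in S_{n-m}}(\ord(\sigma) \mid m) \leq 1$, with equality precisely when $k := n-m \in K_n$. When $k \notin K_n$, some prime power $q^a \leq k$ fails to divide $m$, so $\sigma \in S_k$ must avoid every cycle of length divisible by $q^{v_q(m)+1}$; a standard Poisson/Flajolet--Odlyzko cycle-count bounds this probability by $1 - \Omega(1/q^{v_q(m)+1}) \leq 1 - \Omega(1/k)$, yielding a deficit of at least $\Omega(1/\log n)$ since only $k = O(\log n)$ needs to be considered. The remaining terms (with $\ell \mid m$ and $\ell \neq m$) pin $\ord(\sigma)$ to a set $T_\ell$ of divisors of $m$ each divisible by the ``missing part'' $t_\ell := \prod_{p : v_p(\ell) < v_p(m)} p^{v_p(m)} \ge 2$; combining $|T_\ell| \leq \tau(m)$ (the number of divisors of $m$) with the $\ell^{\infty}$ bound $M(K) \leq K^{-1+o(1)}$ from \eqref{eq:weaker_bound} gives a total secondary contribution of $\tau(m)^2 n^{-1 + o(1)}$.

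For $m \leq n^{O(1)}$ this is $o(1/\log n)$, comfortably beating the main-term deficit and forcing $n p_n(m) < 1$ whenever $k \notin K_n$; combined with the trivial lower bound $p_n(n-1) \geq 1/(n-1)$, this yields both $M(n) \sim 1/n$ and the structural claim. Super-polynomial $m$ (where $\tau(m)$ may exceed $\sqrt{n}$, rendering the $\ell^{\infty}$ input useless) must be handled by a direct combinatorial bound on partitions of $n$ with prescribed lcm $m$, which forces $p_n(m) = o(1/n)$ outright. The main obstacle is precisely this uniform control over all $m$: pushing the secondary estimate below $\Omega(1/\log n)$ in the polynomial range, and treating the super-polynomial range separately since the $\ell^{\infty}$ bound from \cite{acan-burnette-eberhard-schmutz-thomas} is not sharp enough in that regime.
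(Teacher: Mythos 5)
Your decomposition over the length of the cycle containing $1$ is exactly the recursion the paper uses (Corollary \ref{cor:recursion} and \eqref{eq:recursion}, with $\ell = n-k$), and your analysis of the term $\ell = m$ for $m \le n$ --- it equals $1$ precisely when $n-m \in K_n$, and otherwise carries a deficit of order $1/q^{v_q(m)+1}$ --- matches the paper's equality analysis in spirit. (Your claim that ``only $k = O(\log n)$ needs to be considered'' is not right as stated: for $k \notin K_n$ the deficit $\Omega(1/k)$ must beat the secondary contribution for all $k$ up to roughly $n^{1-\varepsilon}$, with larger $k$ killed instead by $\P(\ord(\sigma)\mid m) \le \tau(m)/k$; this is a fixable imprecision.) There are, however, two genuine gaps. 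First, your bound on the secondary terms breaks down for $n < m \le n^{O(1)}$: there is then no term $\ell = m$, and a divisor $\ell$ of $m$ can lie arbitrarily close to $n$, in which case $M(n-\ell)$ is of constant order and the estimate $\tau(m)^2 n^{-1+o(1)}$ is false. Concretely, for $n$ odd and $m = 2(n-2)$ the single term $\ell = n-2$ contributes $\P_{\sigma \in S_2}(\ord(\sigma) = 2) = 1/2$ to $n\,p_n(m)$, so your claimed $o(1)$ total is wrong and you have not excluded $p_n(m) \ge 1/n$ for such $m$ --- which is exactly what the structural half of the theorem requires. The paper closes this by showing that $m$ has \emph{at most one} divisor $d$ in $(n - n^{1/2}, n]$ (two such divisors $d_1<d_2$ would give $\lcm(d_1,d_2) \ge d_1d_2/(d_2-d_1) > m$), bounding all other $\ell$ by $\tau(m)^2 n^{-3/2}$, and then proving via the Erd\H{o}s--Tur\'an product formula that $\P(\lcm(\ord(\pi_{n-d}), d) = m) \ge 1 - O(n^{-1/3})$ forces $d$ to be divisible by every integer up to $n-d$, whence $m = d \le n$ and $n-m \in K_n$. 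You need some version of this step.

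Second, the regime $m \ge n^{1+\varepsilon}$, which you defer to ``a direct combinatorial bound on partitions of $n$ with prescribed lcm,'' is the technical heart of the paper (Proposition \ref{prop:large_orders}): it requires splitting according to the number of cycles $c(\pi_n)$ into three ranges and invoking an upper tail bound for $c(\pi_n)$, the local limit law of Corollary \ref{cor:cycle_divisors} with the input $\sigma(m)/m \ll \log\log m$, and Lemma \ref{lm:cycles_and_primes} with $\omega(m) \ll \log m/\log\log m$. Your $\tau(m)^2 n^{-1+o(1)}$ estimate cannot reach this regime, since $\tau(m)$ need not be $n^{o(1)}$ there and, as you note, the $\ell^\infty$ input \eqref{eq:weaker_bound} is too weak. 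You correctly identify this as the main obstacle but supply no argument for it, so the proof is incomplete precisely where the real work lies.
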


The second result identifies the mode of the distribution of $\ord(\pi_n)$, i.e.\ the value of $m$ for which $p_n(m)$ attains its maximum, at least when $n$ is large enough. Together with Theorem \ref{thm:asymp_for_max}, this gives an essentially complete answer to the questions raised Erdős and Turán and Acan et al.

\begin{theorem}
\label{thm:equality_case}
For all sufficiently large $n$, we have $p_n(m) = M(n)$ if and only if $m = n - \max K_n$.
\end{theorem}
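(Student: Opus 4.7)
The plan starts from \Cref{thm:asymp_for_max}: for $n$ sufficiently large, any $m$ with $p_n(m) \geq 1/n$ lies in the set $\{n-k : k \in K_n\}$. Combined with the bound $M(n) \geq p_n(n) \geq 1/n$ from \eqref{eq:lower_bound_cycle}, every maximizer of $p_n$ already lies in this set. So the plan is to rank the finitely many values $p_n(n-k)$ for $k \in K_n$ and show that the unique maximum is attained at $k = k^* \vcentcolon= \max K_n$.

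For each $k \in K_n$, I would split the contributions to $p_n(n-k)$ according to whether $\pi_n$ has a cycle of length $n-k$. Since $n-k > n/2$ for $n$ large, at most one such cycle can be present, and by the defining property of $K_n$ any such permutation has order exactly $n-k$; this contributes probability $\frac{1}{n-k}$. Writing $\delta_k$ for the residual contribution, we get
\[p_n(n-k) = \frac{1}{n-k} + \delta_k, \quad \delta_k = \P(\ord(\pi_n) = n-k,\ \pi_n\text{ has no cycle of length } n-k).\]
The main term $\frac{1}{n-k}$ is strictly maximized over $k \in K_n$ at $k=k^*$, and the gap satisfies
\[\frac{1}{n-k^*} - \frac{1}{n-k} = \frac{k^*-k}{(n-k^*)(n-k)} \geq \frac{1+o(1)}{n^2}\]
for $k \in K_n$ with $k < k^*$. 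The theorem will therefore follow from the inequality $\delta_k - \delta_{k^*} < \frac{k^*-k}{(n-k^*)(n-k)}$ for all such $k$, which I would derive from the uniform upper bound $\delta_k = o(1/n^2)$.

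The main obstacle is establishing this uniform bound on $\delta_k$. Every cycle structure contributing to $\delta_k$ has all parts among the proper divisors of $n-k$ (hence bounded by $(n-k)/2$), $\lcm$ equal to $n-k$, and total size $n$. Since $k = O(\log n)$ by the prime number theorem, such structures must cover length $n$ using parts of size at most roughly $n/2$, forcing one of two scenarios: either there is a dominant cycle of length close to $(n-k)/2$ with the remaining length filled by smaller parts and many fixed points, or the structure consists of several moderately sized cycles. The cycle-index weight $\prod_j \frac{1}{j^{c_j} c_j!}$ produces heavy factorial suppression in both scenarios, either from the $c_1!$ arising from fixed points in the first or from the $c_j!$ arising from repeated parts in the second. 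The technical delicacy is that $n-k^*$ is forced to be highly divisible (it is a multiple of $\lcm(1,\ldots,k^*)$) and thus supports a richer family of cycle structures than $n-k$ for smaller $k$; I would split the sum defining $\delta_k$ by the largest part of the cycle structure, apply uniform estimates based on the divisor bound $\tau(n-k) = n^{o(1)}$ together with elementary factorial bounds for the resulting sub-sums, and likely refine the methods underlying \Cref{thm:asymp_for_max} to sharpen the control on $p_n$ in a neighborhood of its maximum.
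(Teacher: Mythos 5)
Your overall architecture --- reduce to $m = n-k$ with $k \in K_n$ via \Cref{thm:asymp_for_max} and \eqref{eq:lower_bound_cycle}, split off the probability $\frac{1}{n-k}$ of an $(n-k)$-cycle, and compare the residuals $\delta_k$ --- is exactly the paper's. But the step on which everything rests, the uniform bound $\delta_k = o(1/n^2)$, is false. Take $k=2$ and $n \equiv 0 \pmod 4$, so that $2 \in K_n$ and $(n-2)/2$ is odd. The cycle type consisting of two cycles of length $(n-2)/2$ and one $2$-cycle has total size $n$, has least common multiple of its parts equal to $n-2$, and occurs with probability $\frac{1}{2!\,((n-2)/2)^2}\cdot\frac{1}{2} = \frac{1}{(n-2)^2}$; hence $\delta_2 \geq (n-2)^{-2} \sim n^{-2}$. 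Your heuristic about ``heavy factorial suppression'' misses precisely this configuration: two equal parts of size $\approx n/2$ plus one small correcting cycle costs only a factor $\frac{1}{2!}$, not a decaying factorial. This is the secondary term $\eta(n,k) \asymp 2^{-\lfloor\log_2 k\rfloor}(n-k)^{-2}$ that the paper isolates in Proposition \ref{prop:exact_order}, which establishes $\delta_k = \eta(n,k) + O(n^{-3+o(1)})$ by adapting Warlimont's treatment of the case $k=0$; no softer bound of the kind you propose can be correct.

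Even granting the corrected bound $\delta_k \leq (n-k)^{-2} + O(n^{-3+o(1)})$, your comparison does not close, because you only use $k_0 - k \geq 1$ (with $k_0 = \max K_n$), giving a main-term gap of $(1+o(1))/n^2$, which is of the same size as $\delta_k$. The missing arithmetic ingredient is that for $2 \leq k < k_0$ the integer $\lcm(1,\ldots,k) \geq 2$ divides both $n-k$ and $n-k_0$, hence divides $k_0 - k$, so in fact $k_0 - k \geq 2$ and the gap is at least $\frac{2}{(n-k_0)(n-k)} > \frac{2}{(n-k)^2}$, which comfortably absorbs a residual of size $(n-k)^{-2}$; the remaining cases $k \in \{0,1\}$ are handled by $\eta(n,k)=0$, i.e.\ $\delta_k = O(n^{-3+o(1)})$ there. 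So your proposal needs two substantive repairs: a genuinely second-order evaluation of $\delta_k$ (not merely an upper bound of the wrong order), and the divisibility observation that doubles the main-term gap.
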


\begin{remark}
\label{rem:suff_large}
In principle, one could extract from our arguments a bound on how large $n$ needs to be for the above results to hold. However, what one gets is most probability not small enough in order to check the remaining cases by a naive method. Nevertheless, one cannot entirely drop the assumption that $n$ is sufficiently large since numerical evidence shows that counterexamples do exist for small values $n$.
\end{remark}

In order to prove Theorem \ref{thm:asymp_for_max}, the idea is to consider the joint distribution of the order and the number of cycles, and apply different local limit laws according to whether the number of cycles is large, intermediate or small. In doing so, a key difficulty is avoiding the use of lower tail bounds for the number of cycles -- such an approach is unlikely to produce a bound better than \eqref{eq:weaker_bound}. Nevertheless, after making certain refinements, we are able to make use of some of the methods of \cite{acan-burnette-eberhard-schmutz-thomas}. To establish Theorem \ref{thm:equality_case}, it remains to prove a local limit law confirming the prediction that $\P(\ord(\pi_n) = n-k)$ equals $1/(n-k)$ up to a suitably small error. This can be accomplished by adapting some of the existing results in the literature pertaining to the case $k = 0$.

The rest of the paper is organised as follows. In Section \ref{sec:prelim}, we collect some preliminary facts about the distribution of the cycle type of $\pi_n$ that we will need. Section \ref{sec:asymp_for_max} is devoted to the proof of Theorem \ref{thm:asymp_for_max}. Finally, in Section \ref{sec:equality_case}, we present the proof of Theorem \ref{thm:equality_case}.

\bigskip

\noindent\textbf{Notation.} We use Vinogradov asymptotic notation. Given quantities $A$ and $B$, we write $A \ll B$ to mean $A \leq O(B)$, that is to say there is an absolute constant $C > 0$ such that $|A| \leq C|B|$. This is equivalent to the notation $B \gg A$, i.e.\ $B \geq \Omega(A)$. For functions $f, g \colon \N \to \R$, we write $f(n) = o(g(n))$ and $f(n) \sim g(n)$ to mean that $\lim_{n\to\infty}f(n)/g(n) = 0$ and $\lim_{n\to\infty}f(n)/g(n) = 1$ respectively.

\section{Preliminaries}\label{sec:prelim}

It will be useful to recall that the cycle type of a random permutation can be sampled as follows. Given a positive integer $n$, consider the Markov chain $(X_j^{(n)})_{j\geq0}$ with state space $\N_0$, initial distribution $\P(X_0^{(n)} = n) = 1$ and transition probabilities
\[\P(X_{j+1}^{(n)}=u\mid X_j^{(n)}=v) = \begin{cases}\frac{1}{v} & \text{if } u < v\\1 & \text{if } u = v = 0\\0 & \text{otherwise}\end{cases}.\]
Letting $T^{(n)} \vcentcolon= \min\{j\geq 0 \mid X_j^{(n)} = 0\}$ be the time of hitting $0$, we have the following well-known fact (see e.g.\ \cite[p.\ 257-258]{feller}).

\begin{fact}
\label{fact:markov_chain}
The cycle type of $\pi_n$ has the same distribution as the multiset 
\[\{X_j^{(n)}-X_{j+1}^{(n)}\mid 0 \leq j < T^{(n)}\}.\]
In particular, $\ord(\pi_n)$ has the same distribution as $\lcm\{X_j^{(n)}-X_{j+1}^{(n)}\mid 0 \leq j < T^{(n)}\}$.
\end{fact}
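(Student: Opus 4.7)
The plan is to prove this by induction on $n$, decomposing $\pi_n$ according to the cycle containing a fixed distinguished element, say the element $n$ itself. Let $L$ be the length of that cycle. A direct count shows that the number of $\pi \in S_n$ for which the cycle containing $n$ has length $\ell$ equals $\binom{n-1}{\ell-1}(\ell-1)!(n-\ell)! = (n-1)!$ for every $\ell \in \{1,\ldots,n\}$, so $L$ is uniformly distributed on $\{1,\ldots,n\}$. Moreover, conditionally on the identity of that cycle, the restriction of $\pi_n$ to the remaining $n-\ell$ points is uniformly distributed on the symmetric group of that set, since the map that forgets the cycle through $n$ is a uniform bijection onto $S_{[n]\setminus C}$ for each choice of the cycle $C$ containing $n$.

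Setting $X_1^{(n)} = n - L$, the difference $X_0^{(n)} - X_1^{(n)} = L$ records the length of the first cycle, and $X_1^{(n)}$ is uniformly distributed on $\{0,1,\ldots,n-1\}$, which matches the prescribed transition probability $1/n$ from state $n$. Conditionally on $X_1^{(n)} = m \in \{1,\ldots,n-1\}$, the remaining cycle lengths form the cycle type of a uniform permutation on $m$ points, and by the induction hypothesis (noting that the transition probabilities of the Markov chain do not depend on the initial state) these have the same joint law as the successive differences of the chain started at $m$, which by the Markov property is precisely $(X_j^{(n)})_{j \geq 1}$ conditioned on $X_1^{(n)} = m$. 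Concatenating the two contributions yields the claimed identity in distribution for the full cycle type.

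The "in particular" statement about $\ord(\pi_n)$ is then immediate, since the order of a permutation is by definition the least common multiple of its cycle lengths. The only mild subtlety is matching the absorbing behaviour at $0$ in the Markov chain with the termination of the permutation decomposition once no elements remain; the convention $\P(X_{j+1}^{(n)} = 0 \mid X_j^{(n)} = 0) = 1$ renders this seamless, and $T^{(n)}$ then coincides with the number of cycles of $\pi_n$. As this is a classical result (essentially the size-biased Feller coupling specialised to the uniform measure on $S_n$), I do not expect any genuine obstacle beyond verifying the trivial base case $n=0$ and carefully tracking indices through the induction.
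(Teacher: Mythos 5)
Your proof is correct and is essentially the standard argument: the paper states this as a known fact and simply cites Feller, and the decomposition you give (the cycle through a distinguished element has uniformly distributed length, with the restriction to the remaining points conditionally uniform, followed by induction) is precisely the classical proof behind that citation. The only cosmetic point is that your inductive step as written conditions on $X_1^{(n)} = m \in \{1,\ldots,n-1\}$, so you should note explicitly that $m=0$ (the $n$-cycle case) is the base case; this is trivial and does not affect correctness.
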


Conditioning on the first step of $(X_j^{(n)})_{j\geq0}$, we obtain the following recursive expression for $p_n(m)$. Here and throughout, we use the standard notation $\tau(m)$, $\sigma(m)$ and $\omega(m)$ for the number, sum of positive divisors and the number of prime factors of $m$, respectively. We also recall the well-known fact that $\tau(m) \leq m^{o(1)}$ (see e.g.\ Theorem 2 in Chapter I.5 of \cite{tenenbaum}), which will be used several times in the paper.

\begin{corollary}
\label{cor:recursion}
For any $m, n \in \N$ we have
\[\P(\ord(\pi_n) = m) = \frac{1}{n}\sum_{\substack{0\leq n'<n\\n-n'\mid m}}\P(\lcm(\ord(\pi_{n'}), n-n') = m).\]
In particular, for any $m, n \in \N$ we have
\[\P(\ord(\pi_n) \mid m) \leq \frac{\tau(m)}{n}.\]
\end{corollary}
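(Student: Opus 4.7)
The plan is to derive both formulas as direct consequences of Fact~\ref{fact:markov_chain} by conditioning on the first step of the Markov chain $(X_j^{(n)})_{j \geq 0}$. By the definition of the transition kernel, $X_1^{(n)}$ is uniform on $\{0, 1, \ldots, n-1\}$, each value taken with probability $1/n$. By the Markov property at time $1$, conditional on $X_1^{(n)} = n'$, the shifted chain $(X_{j+1}^{(n)})_{j \geq 0}$ has the same law as $(X_j^{(n')})_{j \geq 0}$. Splitting off the first jump $n - n'$, Fact~\ref{fact:markov_chain} applied to this shifted chain shows that, conditional on $X_1^{(n)} = n'$, the random variable $\ord(\pi_n)$ is distributed as $\lcm(n - n', \ord(\pi_{n'}))$, with the convention $\ord(\pi_0) = 1$ (the empty $\lcm$).

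Summing over the values of $X_1^{(n)}$ with weight $1/n$ then yields
\[p_n(m) = \frac{1}{n} \sum_{n'=0}^{n-1} \P(\lcm(n - n', \ord(\pi_{n'})) = m).\]
Since the event $\lcm(n - n', \ord(\pi_{n'})) = m$ forces $n - n' \mid m$, only those indices survive, giving the first formula.

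For the divisibility bound, I would apply the same conditioning to the event $\{\ord(\pi_n) \mid m\}$. Using that $\lcm(a, b) \mid m$ if and only if $a \mid m$ and $b \mid m$, this gives
\[\P(\ord(\pi_n) \mid m) = \frac{1}{n} \sum_{\substack{0 \leq n' < n \\ n-n' \mid m}} \P(\ord(\pi_{n'}) \mid m).\]
Bounding each summand by $1$ and noting that $n - n'$ ranges over a subset of the positive divisors of $m$, hence takes at most $\tau(m)$ values, yields the bound $\tau(m)/n$. There is essentially no substantive obstacle here; the only care needed is in articulating the Markov-property step so that one obtains joint (not just marginal) agreement between the shifted chain and a fresh size-$n'$ chain, together with the boundary convention for $\pi_0$.
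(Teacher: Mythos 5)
Your proposal is correct and follows exactly the route the paper intends: the corollary is derived by conditioning on the first step of the Markov chain from Fact~\ref{fact:markov_chain}, and the $\tau(m)/n$ bound then follows since the first cycle length $n-n'$ must be a divisor of $m$, of which there are at most $\tau(m)$, each arising with probability $1/n$. Your attention to the joint-law statement of the Markov property and the convention $\ord(\pi_0)=1$ is exactly the right level of care.
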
 

We now turn to a lemma that will serve as a key tool in the proof of Theorem \ref{thm:asymp_for_max}. Before stating it, we quickly set up some terminology. For a permutation $\pi \in S_n$, we define $c(\pi)$ to be the number of cycles in $\pi$. For an arbitrary set $I \subseteq \N$, we say $\pi$ is \emph{$I$-restricted} if the length of each cycle in $\pi$ belongs to $I$.

\begin{lemma}
\label{lm:cycle_lengths}
For any $\ell, n \in \N$ and $I \subseteq \N$ we have
\[\P(c(\pi_n) = \ell,\ \text{$\pi_n$ is $I$-restricted}) \leq \frac{\Bigl(\sum_{i\in I}1/i\Bigr)^{\ell-1}}{n(\ell-1)!}.\]
\end{lemma}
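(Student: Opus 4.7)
The plan is to start from the exact formula for the probability of a given cycle type in $S_n$, namely $\prod_i 1/(i^{c_i} c_i!)$ when there are $c_i$ cycles of length $i$. Summing over all cycle types with exactly $\ell$ cycles whose lengths are confined to $I$, and passing from unordered multisets to ordered tuples using the multinomial coefficient $\ell!/\prod_i c_i!$, one obtains the identity
\begin{equation*}
\P(c(\pi_n) = \ell,\ \pi_n \text{ is } I\text{-restricted}) = \frac{1}{\ell!}\sum_{\substack{(i_1,\ldots,i_\ell)\in I^\ell\\ i_1+\cdots+i_\ell=n}} \frac{1}{i_1\cdots i_\ell}.
\end{equation*}
It therefore suffices to prove the combinatorial estimate $S \leq (\ell/n)\bigl(\sum_{i\in I} 1/i\bigr)^{\ell-1}$, where $S$ denotes the right-hand sum over ordered tuples.

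The key idea is to exploit the sum constraint by multiplying $S$ through by $n = i_1+\cdots+i_\ell$ and distributing:
\begin{equation*}
nS = \sum_{j=1}^\ell\ \sum_{\substack{(i_1,\ldots,i_\ell)\in I^\ell\\ i_1+\cdots+i_\ell=n}} \frac{1}{\prod_{k\neq j} i_k}.
\end{equation*}
By symmetry of the summation region under permuting coordinates, all $\ell$ outer summands are equal. Taking $j = \ell$ and noting that for each $(i_1,\ldots,i_{\ell-1})\in I^{\ell-1}$ the value of $i_\ell$ is forced by the sum constraint, I drop the requirement $i_\ell\in I$ to obtain an upper bound:
\begin{equation*}
nS \leq \ell\sum_{(i_1,\ldots,i_{\ell-1})\in I^{\ell-1}} \frac{1}{i_1\cdots i_{\ell-1}} = \ell\left(\sum_{i\in I}\frac{1}{i}\right)^{\ell-1},
\end{equation*}
which rearranges to the desired bound and, combined with the cycle-type identity above, yields the lemma.

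No serious obstacle is expected; the proof reduces to the standard cycle-type formula together with the multiplication-by-$n$ trick, which is essentially a combinatorial incarnation of the power series identity $n[x^n] f(x)^\ell = \ell[x^{n-1}] f'(x) f(x)^{\ell-1}$ applied to the generating function $f(x) = \sum_{i\in I} x^i/i$. The only conceptual point is to notice that multiplying by the total sum effectively reduces the problem from $\ell$ coordinates to $\ell-1$ at the cost of the advertised factor of $\ell/n$.
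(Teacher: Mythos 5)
Your proof is correct. Note that the paper does not actually prove Lemma \ref{lm:cycle_lengths}: it derives it as a special case of a cited local limit theorem of Ford (Theorem 1.5 there, with $r=2$, $I_1 = I$, $I_2 = [n]\setminus I$, $(m_1,m_2)=(\ell,0)$) and remarks that it also follows by generalising the argument for \cite[Lemma 4.1]{acan-burnette-eberhard-schmutz-thomas}, which is the case $I=[n]$. Your argument is precisely such a self-contained elementary proof: Cauchy's formula for the probability of a cycle type, symmetrisation to ordered tuples, and the multiplication-by-$n$ trick (equivalently, distinguishing the cycle containing a uniformly random point) to trade one coordinate for the factor $\ell/n$. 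All the steps check out, including the edge case $\ell=1$ (where the empty product gives $S\leq 1/n$) and the two relaxations at the end --- dropping both the membership condition $i_\ell\in I$ and the implicit positivity constraint $i_1+\cdots+i_{\ell-1}<n$ only enlarges the sum, which then factors as $\bigl(\sum_{i\in I}1/i\bigr)^{\ell-1}$. So your write-up supplies the details the paper leaves to the references, in essentially the spirit the paper indicates; the only thing the citation to Ford buys beyond this is a matching lower bound and the more general multi-set version, neither of which is needed here.
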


Lemma \ref{lm:cycle_lengths} is a special case of \cite[Theorem 1.5]{ford}, a more general local limit law for counts of cycle lengths in disjoint sets -- it follows by taking $r = 2$, $(I_1,I_2) = (I, [n]\setminus I)$ and $(m_1, m_2) = (\ell,0)$. At the same time, it can be obtained by a straightforward generalisation of the argument behind \cite[Lemma 4.1]{acan-burnette-eberhard-schmutz-thomas}, which corresponds to the case $I = [n]$. 

Taking $I$ to be the set of divisors of a given positive integer $m$, we obtain the following corollary, which is effective when $\ell$ is not too small and $m$ is not exceedingly large.

\begin{corollary}
\label{cor:cycle_divisors}
For any $\ell, m, n \in \N$ we have
\[\P(c(\pi_n) = \ell,\ \ord(\pi_n) \mid m) \leq \frac{1}{n(\ell-1)!}\Bigl(\frac{\sigma(m)}{m}\Bigr)^{\ell-1}.\]
\end{corollary}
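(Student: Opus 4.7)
The plan is to apply Lemma \ref{lm:cycle_lengths} directly, with $I$ chosen to be the set $D_m \vcentcolon= \{d \in \N \mid d \mid m\}$ of positive divisors of $m$. The main observation to check is that the event $\{\ord(\pi_n) \mid m\}$ coincides with the event that $\pi_n$ is $D_m$-restricted. This is just the definition of the order as the least common multiple of the cycle lengths: if $\ord(\pi_n) \mid m$, then each cycle length, being a divisor of $\ord(\pi_n)$, divides $m$; conversely, if every cycle length divides $m$, then so does their lcm.

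Granted that equivalence, Lemma \ref{lm:cycle_lengths} yields
\[\P(c(\pi_n) = \ell,\ \ord(\pi_n) \mid m) = \P(c(\pi_n) = \ell,\ \pi_n \text{ is $D_m$-restricted}) \leq \frac{\bigl(\sum_{d \mid m} 1/d\bigr)^{\ell-1}}{n(\ell-1)!}.\]
It then remains to simplify the inner sum. Substituting $d \mapsto m/d$, which is a bijection on $D_m$, gives
\[\sum_{d \mid m}\frac{1}{d} = \sum_{d \mid m}\frac{1}{m/d} = \frac{1}{m}\sum_{d \mid m} d = \frac{\sigma(m)}{m},\]
and plugging this into the previous display yields exactly the claimed bound.

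There is no real obstacle here, as the corollary is essentially a bookkeeping restatement of Lemma \ref{lm:cycle_lengths}. The only subtlety worth noting is that $D_m$ might contain integers larger than $n$, but this causes no issue: such divisors simply cannot appear as cycle lengths of a permutation in $S_n$, so including or excluding them from $I$ only changes the right-hand side by enlarging it, which is harmless for an upper bound.
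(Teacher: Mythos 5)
Your proof is correct and is exactly the paper's argument: the paper derives this corollary by taking $I$ to be the set of divisors of $m$ in Lemma \ref{lm:cycle_lengths} and using $\sum_{d\mid m}1/d = \sigma(m)/m$. Your added remark that divisors exceeding $n$ are harmless is accurate, since the lemma holds for arbitrary $I\subseteq\N$.
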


Finally, we require the following lemma, which will be useful in the regime when the number of cycles is significantly below its expected value and the order is somewhat large. It is based on Fact \ref{fact:markov_chain} and can essentially be read out of the proof of \cite[Lemma 5.1]{acan-burnette-eberhard-schmutz-thomas}.

\begin{lemma}
\label{lm:cycles_and_primes}
For any $\ell, m, n \in \N$ we have
\[\P(c(\pi_n) = \ell,\ m \mid \ord(\pi_n)) \leq \frac{\ell^{\omega(m)}}{m},\]
where $\omega(m)$ is the number of distinct prime factors of $m$.
\end{lemma}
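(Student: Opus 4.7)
The plan is to combine the Markov chain representation of Fact \ref{fact:markov_chain} with a union bound over which cycle is responsible for each prime-power factor of $m$. Writing $T = T^{(n)}$ and $L_j = X_j^{(n)} - X_{j+1}^{(n)}$ for the successive cycle lengths, we have $c(\pi_n) = T$ and $\ord(\pi_n) = \lcm(L_0, \ldots, L_{T-1})$. Factor $m = q_1 q_2 \cdots q_s$ into its maximal prime powers, where $s = \omega(m)$. Since the $q_i$ are pairwise coprime, $m \mid \ord(\pi_n)$ is equivalent to the existence, for each $i \in \{1, \ldots, s\}$, of some index $j_i \in \{0, \ldots, \ell-1\}$ with $q_i \mid L_{j_i}$. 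Taking a union bound over the $\ell^s$ choices of $(j_1, \ldots, j_s)$ reduces the task to bounding, for any fixed such tuple, the probability that $T = \ell$ and $q_i \mid L_{j_i}$ for every $i$.

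For a fixed tuple, group the prime powers by step: for each $j \in \{0, \ldots, \ell-1\}$, let $d_j$ be the product of those $q_i$ with $j_i = j$ (so $d_j = 1$ if no $i$ maps to $j$). Then $\prod_j d_j = m$, and the event above says $d_j \mid L_j$ for every $j$, together with $T = \ell$. The plan is to show by induction on $\ell$ that, for any starting state $v$ and any tuple $(d_0, \ldots, d_{\ell-1})$ of positive integers,
\[
\P\bigl(T^{(v)} = \ell,\ d_j \mid L_j \text{ for } 0 \leq j < \ell\bigr) \leq \frac{1}{d_0 d_1 \cdots d_{\ell-1}}.
\]
The key step is the elementary estimate that, conditional on $X_j^{(v)} = w$, the cycle length $L_j$ is uniform on $\{1, \ldots, w\}$, so the number of permissible values of $L_j$ divisible by $d_j$ is $\lfloor w/d_j \rfloor \leq w/d_j$. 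Conditioning on the first transition and applying the inductive hypothesis to the remaining $\ell - 1$ steps gives exactly the desired bound, with the factor $1/w$ from the transition probability cancelled by the count $\lfloor w/d_j\rfloor$ to yield $1/d_j$ per step.

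Combining: each of the $\ell^s$ tuples contributes at most $1/\prod_j d_j = 1/m$, giving
\[
\P(c(\pi_n) = \ell,\ m \mid \ord(\pi_n)) \leq \ell^s \cdot \frac{1}{m} = \frac{\ell^{\omega(m)}}{m},
\]
as required. The main (and essentially only) subtlety is the bookkeeping ensuring that the product $\prod_j d_j$ equals $m$ for every assignment -- this uses the pairwise coprimality of the prime-power factors, which is what makes $\omega(m)$ (and not some larger quantity like the total number of prime-power divisors of $m$) appear in the bound.
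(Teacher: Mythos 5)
Your proof is correct and follows essentially the same route as the paper's: a union bound over the $\ell^{\omega(m)}$ assignments of cycles to the maximal prime-power divisors of $m$, with the per-assignment bound $1/m$ obtained from the Markov chain representation by conditioning step by step. Your explicit induction with the grouped moduli $d_j$ and the count $\lfloor w/d_j\rfloor \leq w/d_j$ is just a more detailed write-up of the conditioning argument the paper sketches.
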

\begin{proof}[Proof sketch.]
    Let $m = \prod_{i=1}^{\omega(m)}p_i^{\alpha_i}$ be the prime factorisation of $m$. Then $m$ divides the order if and only if for each $i \in [\omega(m)]$ there exists a cycle of length divisible by $p_i^{\alpha_i}$. Provided the number of cycles is $\ell$, there are $\ell^{\omega(m)}$ ways to assign a cycle to each prime factor. For any such assignment, the probability of the corresponding divisibility conditions being satisfied does not exceed $1/m$. Indeed, the Markov property implies that, conditional on any earlier divisibility constraints, the probability that the length of a cycle is divisible by all prime powers to which it is assigned is at most the reciprocal of their product. The desired conclusion now follows from the union bound.
\end{proof}

\section{Proof of Theorem \ref{thm:asymp_for_max}}\label{sec:asymp_for_max}

The following proposition is the main stepping stone towards Theorem \ref{thm:asymp_for_max}. Roughly speaking, it says that one can restrict attention to values of the order that are not much larger than $n$.

\begin{proposition}
\label{prop:large_orders}
For any $\varepsilon > 0$, we have
\[\max_{m\geq n^{1+\varepsilon}}p_n(m) = o(1/n).\]
\end{proposition}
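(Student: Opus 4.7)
The plan is to bound $p_n(m)$ by decomposing the event $\{\ord(\pi_n) = m\}$ according to the number of cycles $c(\pi_n)$, using Lemma \ref{lm:cycles_and_primes} when $c(\pi_n)$ is small and Corollary \ref{cor:cycle_divisors} when $c(\pi_n)$ is large. Fix $\varepsilon > 0$ and $m \geq n^{1+\varepsilon}$. We may assume $p_n(m) > 0$, so by Landau's bound $\log m \ll \sqrt{n \log n}$ and in particular $\log\log m \ll \log n$. Set $L = L(m) := \lfloor B \log \log m\rfloor$ for a sufficiently large absolute constant $B$, and write
\[
p_n(m) \leq \P(c(\pi_n) \leq L,\ m \mid \ord(\pi_n)) + \P(c(\pi_n) > L,\ \ord(\pi_n) \mid m).
\]

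The first term I would bound using Lemma \ref{lm:cycles_and_primes} summed over $1 \leq \ell \leq L$, which yields at most $L^{\omega(m)+1}/m$. Combining the classical bounds $\omega(m) \ll \log m/\log\log m$ and $\log L \leq \log\log\log m + O(1)$, one gets $(\omega(m)+1) \log L = o(\log m)$ uniformly in $m$, so $L^{\omega(m)+1} = m^{o(1)}$, and hence this contribution is at most $m^{-1+o(1)} \leq n^{-(1+\varepsilon)+o(1)} = o(1/n)$.

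The second term I would handle via Corollary \ref{cor:cycle_divisors} together with Gronwall's bound $\sigma(m)/m \leq C_0 \log\log m$. Choosing $B \geq 2eC_0$ ensures $\ell - 1 \geq L \geq 2e\,\sigma(m)/m$ for every $\ell > L$, so Stirling gives $(\sigma(m)/m)^{\ell-1}/(\ell-1)! \leq (e\sigma(m)/(m(\ell-1)))^{\ell-1} \leq 2^{-(\ell-1)}$. The resulting geometric tail contributes at most $2^{1-L}/n = O((\log m)^{-B\log 2})/n$, which is $o(1/n)$ since $\log m \to \infty$. Combining the two estimates yields $p_n(m) = o(1/n)$ uniformly in $m \geq n^{1+\varepsilon}$.

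The main obstacle is selecting the threshold $L$: it must exceed $\sigma(m)/m \asymp \log\log m$ so that the tail from Corollary \ref{cor:cycle_divisors} decays geometrically, and at the same time be small enough that $L^{\omega(m)} = m^{o(1)}$. The slack between $\omega(m) = O(\log m/\log\log m)$ and $\log L = O(\log\log\log m)$ is precisely what allows both requirements to be met, and the choice $L \asymp \log\log m$ exploits this gap. Verifying the estimates uniformly across the entire range $n^{1+\varepsilon} \leq m \leq e^{O(\sqrt{n \log n})}$ is what requires some care, but both bounds are ultimately controlled by $\log m \to \infty$ rather than by the specific size of $m$.
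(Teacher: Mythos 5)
Your proof is correct, and it reorganises the paper's argument in a way worth noting. The paper splits into three regimes with $n$-dependent thresholds: $c(\pi_n) \leq C_1\log\log n$ (handled by Lemma \ref{lm:cycles_and_primes}), $C_1\log\log n < c(\pi_n) \leq C_2\log n$ (handled by Corollary \ref{cor:cycle_divisors}, after observing that $\ord(\pi_n) \leq n^{c(\pi_n)}$ kills the case $m > n^{C_2\log n}$), and $c(\pi_n) > C_2\log n$ (handled by the upper tail bound for the number of cycles from \cite[Corollary 4.2]{acan-burnette-eberhard-schmutz-thomas}). You use the same two key inputs — Lemma \ref{lm:cycles_and_primes} below the threshold and Corollary \ref{cor:cycle_divisors} above it — but with a single $m$-dependent cutoff $L \asymp \log\log m$. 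This buys two simplifications: the geometric decay $2^{-(\ell-1)}$ from Corollary \ref{cor:cycle_divisors} covers the entire range $\ell > L$ up to $\ell = n$, so the cycle-count tail bound is not needed at all; and tying $L$ to $\log\log m$ rather than $\log\log n$ means the bound $\sigma(m)/m \ll \log\log m$ is automatically compatible with the threshold for every admissible $m$, removing the need to treat very large $m$ separately. The key numerical facts are the same in both arguments ($\omega(m) \ll \log m/\log\log m$ against $\log L \ll \log\log\log m$ for the few-cycles term, and Stirling against Gronwall for the many-cycles term). Two trivial points to tighten: take $B$ strictly larger than $2eC_0$ (say $B = 2eC_0 + 1$) so that $\lfloor B\log\log m\rfloor \geq 2eC_0\log\log m$ actually holds, and note that the Landau bound you invoke at the start is not actually needed anywhere — both of your estimates depend only on $\log m \to \infty$, as you yourself observe at the end.
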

\begin{proof}
    Fix $\varepsilon > 0$, let $n$ be sufficiently large in terms of $\varepsilon$ and suppose that $m \geq n^{1+\varepsilon}$. The event that $\pi_n$ has order $m$ can be decomposed into the following three events according to the number of cycles in $\pi_n$:
    \[E_1 \vcentcolon= \{c(\pi_n) \leq C_1\log\log n,\ \ord(\pi_n) = m\},\]
    \[E_2 \vcentcolon= \{C_1\log\log n < c(\pi_n) \leq C_2\log n,\ \ord(\pi_n) = m\},\]
    \[E_3 \vcentcolon= \{c(\pi_n) > C_2\log n,\ \ord(\pi_n) = m\},\]
    where $C_1, C_2 > 0$ are sufficiently large absolute constants. We estimate the probabilities of these events in turn. First, the upper tail bound for the number of cycles (see e.g.\ \cite[Corollary 4.2]{acan-burnette-eberhard-schmutz-thomas}) gives
    \[\P(E_3) \leq \P(c(\pi_n) > C_2\log n) = o(1/n)\]
    provided $C_2$ is large enough. Next, since the order of a permutation is at most the product of the lengths of its cycles, we have $\ord(\pi_n) \leq n^{c(\pi_n)}$. Thus, if $m > n^{C_2\log n}$, then $\P(E_2) = 0$. Otherwise, employing the standard estimate $\sigma(m) \ll m\log\log m$ (see e.g.\ Theorem 5 in Chapter I.5 of \cite{tenenbaum}), we obtain $\sigma(m)/m \leq C\log\log n$ for some absolute constant $C > 0$. Hence, using Corollary \ref{cor:cycle_divisors} and the estimate $k! \geq (k/e)^k$, we obtain
    \[\P(E_2) = \sum_{C_1\log\log n<\ell\leq C_2\log n}\P(c(\pi_n) = \ell,\ \ord(\pi_n) = m) \leq C_2\log n\cdot\frac{1}{n}\Bigl(\frac{2Ce}{C_1}\Bigr)^{\frac{1}{2}C_1\log\log n} = o(1/n)\]
    provided $C_1$ is large enough. Finally, by Lemma \ref{lm:cycles_and_primes} and the standard estimate $\omega(m) \ll \frac{\log m}{\log\log m}$ (see e.g.\ Theorem 3 in Chapter I.5 of \cite{tenenbaum}), for any $\ell \leq C_1\log\log n$ we have
    \[\P(c(\pi_n) = \ell,\ \ord(\pi_n) = m) \leq \exp\Bigl(O\Bigl(\frac{\log m}{\log\log n}\cdot\log\log\log n\Bigr) - \log m\Bigr) \leq \frac{1}{n^{1+\varepsilon/2}}.\]
    By summing over all $\ell$ in this range, it follows that
    \[\P(E_1) = \sum_{\ell\leq C_1\log\log n}\P(c(\pi_n) = \ell,\ \ord(\pi_n) = m) \leq \frac{C_1\log\log n}{n^{1+\varepsilon/2}} = o(1/n),\]
    which concludes the proof.
\end{proof}

\begin{remark}
\label{rem:alt_proof}
Following similar lines as above, one can give a somewhat shorter proof of \eqref{eq:equal_order_prob} than in \cite{acan-burnette-eberhard-schmutz-thomas}. Indeed, by an argument analogous to the estimation of the probabilities of the events $E_2$ and $E_3$, one can obtain
\begin{equation}\label{eq:squared_bound}
    \P(c(\pi_n) \geq L,\ \ord(\pi_n) = m) \leq o(1/n^2),
\end{equation}
where $m \in \N$ is arbitrary and we set $L \vcentcolon= C\log n/\log\log n$ for some large constant $C > 0$. Hence, by dividing into cases according to the number of cycles in $\pi_n$ and $\pi_n'$, one can bound the left-hand side of \eqref{eq:equal_order_prob} by
\[\P(\ord(\pi_n)=\ord(\pi_n'),\ c(\pi_n),c(\pi_n') \leq L) + 2\P(\ord(\pi_n)=\ord(\pi_n'),\ c(\pi_n)\geq L).\]
By conditioning on the order of $\pi_n'$ and using \eqref{eq:squared_bound}, the second term can be seen to be $o(n^{-2})$. On the other hand, the lower tail bound for the number of cycles \cite[Corollary 4.2]{acan-burnette-eberhard-schmutz-thomas} implies that the first term is at most
\[\P(c(\pi_n) \leq L)^2 \leq n^{-2+o(1)},\]
whence \eqref{eq:equal_order_prob} follows.
\end{remark}

We are now ready to prove Theorem \ref{thm:asymp_for_max}. In view of \eqref{eq:lower_bound_cycle}, it suffices to show that, under the assumption that $n$ is sufficiently large and $m$ satisfies $p_n(m) \geq 1/n$, we have $p_n(m) \leq (1+o(1))/n$ and $n - m \in K_n$. In particular, by Proposition \ref{prop:large_orders}, we may assume that $m \leq n^{4/3}$ say. The starting point is an application of Corollary \ref{cor:recursion}. By the first statement, we have
\begin{equation}\label{eq:recursion}
    p_n(m) = \frac{1}{n}\sum_{\substack{0\leq k<n\\n-k\mid m}}\P(\lcm(\ord(\pi_k), n-k) = m),
\end{equation}
and by the second statement (applied with $k$ in place of $n$),
\begin{equation}\label{eq:bound_on_summand}
     \P(\lcm(\ord(\pi_k), n-k) = m) \leq \frac{\tau(m)}{k}
\end{equation}
whenever $0 < k < n$. It follows that the total contribution of all $k \geq n^{1/2}$ to the right-hand side of \eqref{eq:recursion} is at most
\[\frac{1}{n}\cdot\tau(m)\cdot\frac{\tau(m)}{n^{1/2}} = \frac{\tau(m)^2}{n^{3/2}} \ll n^{-4/3}.\]
On the other hand, out of those $k$ that are less than $n^{1/2}$, at most one contributes to $p_n(m)$. Indeed, if this were not the case, then $m$ would have two divisors in the interval $(n-n^{1/2},n]$, call them $d_1 < d_2$. This would lead to a contradiction since
\[\lcm(d_1,d_2) = \frac{d_1d_2}{\gcd(d_1,d_2)} \geq \frac{d_1d_2}{d_2-d_1} \geq \frac{(n-n^{1/2})^2}{n^{1/2}} > m.\]
Thus, $m$ has a unique divisor $d \in (n-n^{1/2},n]$ and we have
\[p_n(m) \leq \frac{1}{n}\P(\lcm(\ord(\pi_{n-d}), d) = m) + O(n^{-4/3}).\]
In particular, we have $p_n(m) \leq (1+o(1))/n$. Moreover, since $m$ was assumed to satisfy $p_n(m) \geq 1/n$, it follows that
\begin{equation}\label{eq:small_prob}
    \P(\lcm(\ord(\pi_{n-d}), d) \neq m) \ll n^{-1/3}.
\end{equation}
We contend that this forces $d$ to be divisible by all positive integers less than or equal to $n-d$. Indeed, suppose this does not hold. Then there exists a prime $p$ such that the largest power of $p$ not exceeding $n-d$, call it $q$, doesn't divide $d$. In particular, by maximality of $q$, we have 
\begin{equation}\label{eq:maximality}
    q^2 \geq pq > n-d.
\end{equation}
Let $E$ be the event that the order of $\pi_{n-d}$ is divisible by $q$. Then on $E$, the $p$-adic valuation of $\lcm(\ord(\pi_{n-d}), d)$ equals that of $q$, and on $E^c$, it is strictly less than that of $q$. Consequently, at least one of $E$, $E^c$ is contained in the event on the left-hand side of \eqref{eq:small_prob}, so $\min(\P(E),\P(E^c)) \ll n^{-1/3}$. But by \cite[Lemma 1]{erdos-turan-groups-ii}, we have the exact expression
\[\P(E^c) = \prod_{j=1}^{\lfloor (n-d)/q\rfloor}\Bigl(1-\frac{1}{jq}\Bigr).\]
Hence, using \eqref{eq:maximality}, we obtain the approximation
\begin{equation}\label{eq:approx_prob}
    \frac{1}{q} \leq \P(E) \leq \sum_{j=1}^{\lfloor (n-d)/q\rfloor}\frac{1}{jq} \ll \frac{\log q}{q}.
\end{equation}
Note that by \eqref{eq:small_prob}, we certainly have
\[\P(\lcm(\ord(\pi_{n-d}),d) = m) \geq \frac{1}{2},\]
so \eqref{eq:bound_on_summand} implies $n-d \leq 2\tau(m)$. This means that $q \ll n^{1/4}$ say, whence the lower bound \eqref{eq:approx_prob} implies $\P(E) \gg n^{-1/4}$. Thus, we cannot have $\P(E) \ll n^{-1/3}$, so the only remaining option is $\P(E^c) \ll n^{-1/3}$. In view of the upper bound \eqref{eq:approx_prob}, this means that $q \ll 1$. Hence, by \eqref{eq:maximality}, we also have $n-d \ll 1$. But then necessarily $\P(E^c) = 0$, which is absurd since $\P(\ord(\pi_{n-d}) = 1) > 0$. Therefore, the claim follows, so in particular $d = m$. In other words, we conclude that $m = n-k$ for some $k \in K_n$, thereby completing the proof.

\section{Proof of Theorem \ref{thm:equality_case}}\label{sec:equality_case}

Theorem \ref{thm:equality_case} follows by combining Theorem \ref{thm:asymp_for_max} and the following proposition, which gives accurate control on the point probabilities $\P(\ord(\pi_n) = n-k)$ for $k \in K_n$.

\begin{proposition}
\label{prop:exact_order}
For any $k \in K_n$ we have
\[\P(\ord(\pi_n) = n-k) = \frac{1}{n-k} + \eta(n,k) + O(n^{-3+o(1)}),\]
where we define
\[\eta(n,k) \vcentcolon= \begin{cases}0 & \text{if } k \in \{0,1\} \text{ or } 2^{\lfloor\log_2k\rfloor+1} \mid n-k\\\frac{2^{1-\lfloor\log_2k\rfloor}}{(n-k)^2} & \text{otherwise}\end{cases}.\]
\end{proposition}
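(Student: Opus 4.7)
The plan is to expand $\P(\ord(\pi_n) = L)$ (with $L = n-k$) according to the cycle structure of $\pi_n$. Since $L > n/2$ for $n$ large (as $\max K_n \ll \log n$), $\pi_n$ contains at most one cycle of length $L$, and this event has probability exactly $1/L$; moreover, because $k \in K_n$, any $L$-cycle forces $\ord(\pi_n) = L$, as the remaining cycles have lengths at most $k$ and thus divide $\lcm(1,\ldots,k) \mid L$. Hence $\P(\ord(\pi_n) = L) = 1/L + \P(A)$, where $A := \{\ord(\pi_n) = L \text{ and } \pi_n \text{ has no } L\text{-cycle}\}$, reducing the task to showing $\P(A) = \eta(n,k) + O(n^{-3+o(1)})$.

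Next, partition $A$ by the number $N$ of $L/2$-cycles in $\pi_n$: since $3L/2 > n$ for large $n$, only $N \in \{0,1,2\}$ is possible, and $N = 0$ is forced when $L$ is odd (in which case $k \in \{0,1\}$ from $2 \mid \lcm(1,\ldots,k) \mid L$, and $\eta = 0$). The case $N = 2$ produces the correction $\eta$ exactly: a direct cycle-type count gives $\P(N=2) = 2/L^2$, and conditional on two $L/2$-cycles the remaining $k$ elements form a uniform permutation $\tau \in S_k$ with $\ord(\pi_n) = \lcm(L/2,\ord(\tau))$, which equals $L$ iff $v_2(\ord(\tau)) = v_2(L)$ (the odd part of $L$ being supplied already by $L/2$). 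Writing $q = 2^{\lfloor \log_2 k\rfloor}$ and using $\ord(\tau) \mid \lcm(1,\ldots,k)$, this condition is impossible in Case B (contribution $0 = \eta$), and in Case A equivalent to $\tau$ containing a cycle of length $q$; since $q > k/2$ at most one such cycle can exist, so the probability equals the expected number, $1/q$. Multiplying yields $\P(A, N = 2) = 2/(qL^2) = \eta$ on the nose.

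The technical heart is the bound $\P(A, N \in \{0,1\}) = O(n^{-3+o(1)})$. Both events force an associated uniform random permutation, namely $\pi_n$ itself when $N = 0$ or the uniform permutation on the $(L/2+k)$-element complement of the $L/2$-cycle when $N = 1$, to be $I$-restricted with $I := \{d \mid L : d < L/2\}$. The probability that a random element of $S_m$ is $I$-restricted equals the coefficient $[x^m]\exp\bigl(\sum_{d \in I} x^d/d\bigr)$, which I would estimate via Hayman-type saddle-point asymptotics. Since $\max I \leq L/p^*$, where $p^*$ is the smallest odd prime factor of $L$ (or $L/4$ if $L$ is a power of $2$), the saddle point $r$ determined by $\sum_{d \in I} r^d = m$ is a slight perturbation of $1$ with $\log r \sim (\log m)/\max I$; a direct computation then gives a bound of order $L^{-5/2+o(1)}$ for $m = L/2 + k$ and $n^{-4+o(1)}$ for $m = n$ in the worst case $p^* = 3$, with stronger bounds otherwise. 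Combined with $\P(\exists L/2\text{-cycle}) = 2/L$, this yields $\P(A, N = 1) = O(L^{-7/2+o(1)})$ and $\P(A, N = 0) = O(n^{-4+o(1)})$, both $O(n^{-3+o(1)})$.

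The main obstacle is carrying out the saddle-point estimate uniformly across all arithmetic structures of $L$: handling together the case where $L$ has many small prime factors (so $\sigma(L)/L$ is as large as $\log\log L$) and the case of sparse $I$ such as a prime power, and tracking constants carefully enough to absorb everything into the $n^{o(1)}$ slack. An alternative route, hinted at in the paper's mention of ``adapting results for the $k=0$ case'', is to apply Corollary~\ref{cor:recursion} to write $p_n(n-k) = \frac{1}{n}\sum_{c\mid L}\P(\lcm(\ord(\pi_{n-c}), c) = L)$, isolate the $c = L$ contribution as $1/n$ and the $c = L/2$ contribution (which contains $\eta$ via the substructure of a further $L/2$-cycle in $\pi_{L/2+k}$), and import an existing local limit theorem for $p_m(m)$ from the $k = 0$ literature to estimate the remaining terms.
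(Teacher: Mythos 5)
Your reduction to $\P(\ord(\pi_n)=L)=1/L+\P(A)$ with $L=n-k$, and your treatment of the $N=2$ case, are correct; the latter is a cleaner, more probabilistic derivation of $\eta(n,k)$ than the paper's, which extracts the same term from Cauchy's formula by adapting Warlimont. The genuine gap is in the bounds you claim for $N\in\{0,1\}$, which are false, and in the method proposed to prove them, which cannot deliver what the proposition needs. Concretely, suppose $k\geq 3$, so that $6\mid\lcm(1,\ldots,k)\mid L$. The configuration consisting of one cycle each of lengths $L/2$, $L/3$, $L/6$, together with an arbitrary permutation of the remaining $k$ points (whose cycle lengths automatically divide $\lcm(1,\ldots,k)\mid L$), satisfies $\lcm(L/2,L/3)=L$ and hence lies in $A\cap\{N=1\}$; its probability is $(1-o(1))\cdot 36/L^3$. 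So $\P(A,N=1)=\Theta(L^{-3})$ in general, refuting your claimed $O(L^{-7/2+o(1)})$ (similarly, three $(L/3)$-cycles show $\P(A,N=0)$ can be $\Theta(L^{-3})$, refuting the claimed $O(n^{-4+o(1)})$). Worse, the saddle-point route cannot even recover the needed $O(n^{-3+o(1)})$ for $N=1$: the complement has $m'=L/2+k$ points and is restricted to $I=\{d\mid L:\ d<L/2\}$ with $\max I\leq L/3\approx\frac{2}{3}m'$, and in the bound $[x^{m'}]f(x)\leq f(r)r^{-m'}$ one cannot take $r^{L/3}$ much beyond $L^{1+o(1)}$ without $f(r)$ exploding through the large divisors of $L$, so $r^{-m'}=(r^{L/3})^{-3m'/L}\geq L^{-3/2+o(1)}$ and the product with $\P(N\geq1)\leq 2/L$ is only $L^{-5/2+o(1)}$, short of $n^{-3+o(1)}$ by a factor of $n^{1/2}$. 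A Hayman-type refinement does not close this: the true value of $\P(\pi_{m'}\ \text{is $I$-restricted})$ is $\Theta(L^{-2})$, dominated by the single sparse configuration $L/3+L/6+(\text{$k$ leftover points})$ rather than by a Gaussian bulk, so the smooth saddle-point prediction is simply wrong here.

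The robust fix is combinatorial rather than analytic: decompose according to the number of cycles of length at least $n^{1-\delta}$ (all of which must have lengths among the at most $\tau(L)=n^{o(1)}$ large divisors of $L$). Configurations with at most one such cycle leave $\gg n^{1-o(1)}$ points to be covered by $\gg n^{\delta}$ cycles with lengths dividing $L$, which by Lemma \ref{lm:cycle_lengths} has super-polynomially small probability; configurations with at least three contribute at most $(\tau(L)n^{\delta-1})^{3}=n^{-3+3\delta+o(1)}$; and among those with exactly two, every choice of lengths other than $(L/2,L/2)$ again forces many small cycles. This is exactly the paper's $T_i$ decomposition. Your $N=2$ computation can be retained as the identification of the secondary term, but the $N\in\{0,1\}$ analysis must be reorganised along these lines.
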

\begin{proof}
    The proof is a relatively straightforward adaptation of the arguments of Warlimont \cite{warlimont}, which deal with the case $k = 0$.\footnote{In fact, Warlimont considers the probability that the order divides $n$ instead of being exactly equal to $n$, but this distinction is not significant.} Hence, we will be fairly brief on the details. As in \cite{warlimont}, we start by using Cauchy's formula \cite[Theorem 1.2]{ford} to express
    \[\P(\ord(\pi_n) = n-k) = \frac{1}{n-k} + \sum_{\substack{m,m_1,\ldots,m_r \in \N_0\\m+\sum_{j=1}^{r}m_jd_j = n\\\lcm\{d_j\mid j\in[r],\ m_j > 0\} = n-k}}\frac{1}{m!}\prod_{j=1}^{r}\frac{1}{m_j!d_j^{m_j}},\]
    where $1 < d_1 < \ldots < d_r < n-k$ are the divisors of $n-k$. For each $i \in \N_0$, we let $T_i$ be the total contribution of the terms satisfying $\sum_{j=s+1}^{r}m_j = i$, where $s$ is the number of $j \in [r]$ for which $d_j < n^{1-\delta}$, and $\delta > 0$ is some small parameter. One can then proceed in the same way as in \cite{warlimont} to bound
    \[\sum_{i=3}^{\infty}T_i \ll (\tau(n-k)n^{\delta-1})^3.\]
    Furthermore, in analogy to \cite{warlimont}, one can establish that 
    \[T_0 \leq F(n,k), \quad T_1 \leq \tau(n-k)n^{\delta-1}F(n,k),\] 
    where we define
    \[F(n,k) \vcentcolon= n\sum_{m\geq A(n,k)}\frac{1}{m!} + 2^{-B(n,k)}\tau(n-k)\exp(\tau(n-k)),\]
    \[A(n,k) \vcentcolon= \frac{n}{6\tau(n-k)},\quad B(n,k) \vcentcolon= \frac{n^{\delta}}{6\tau(n-k)}.\]
    In a similar vein, the total contribution to $T_2$ of all terms apart from those with
    \begin{equation}\label{eq:special_terms}
        d_r = \frac{n-k}{2},\quad m_r = 2, \quad m_{s+1} = \ldots = m_{r-1} = 0
    \end{equation}
    is at most $O((\tau(n-k)n^{\delta-1})^2F(n,k))$. It therefore remains to show that the contribution of the terms satisfying \eqref{eq:special_terms} is precisely $\eta(n,k)$. Indeed, if $n$ is large enough, then we have $\tau(n-k) \leq n^{\delta/3}$ and hence 
    \[A(n,k) \geq \frac{1}{6}n^{1-\delta/3}, \quad B(n,k) \geq \frac{1}{6}n^{2\delta/3}.\] 
    Consequently, we may bound
    \[\sum_{i=3}^{\infty}T_i \ll n^{-3+4\delta}\]
    and also
    \[F(n,k) \ll \exp\Bigl(-\frac{1}{6}n^{1-\delta/3}\Bigr) + \exp\Bigl(-\frac{\log2}{6}n^{2\delta/3}+n^{\delta/3}+\frac{\delta}{3}\log n\Bigr),\]
    which is certainly $O(n^{-3})$. Since $\delta > 0$ is arbitrary, we obtain an error term of the desired form.
    
    To finish the proof, we carefully examine the terms that satisfy \eqref{eq:special_terms}. For such terms, we have $m+\sum_{j=1}^{s}m_jd_j = k$. In order to have $\lcm\{d_j \mid j\in [r],\ m_j > 0\} = n-k$, there must exist $j \in [s]$ such that $m_j > 0$ and $\nu_2(d_j) = \nu_2(n-k)$, where $\nu_2$ denotes $2$-adic valuation. For this to be possible, we need $\nu_2(n-k)$ to be equal to the maximum of $\nu_2(t)$ over all $t \in [k]$. In particular, if $k \in \{0,1\}$ or $\nu_2(n-k) > \lfloor\log_2k\rfloor$, this is not possible. Otherwise, the terms of interest are precisely those which in addition to \eqref{eq:special_terms} satisfy $m_j=1$ for the unique $j \in [s]$ such that $d_j = 2^{\lfloor\log_2k\rfloor}$. Their total contribution is easily seen to be
    \[\frac{1}{2\bigl(\frac{n-k}{2}\bigr)^2\cdot2^{\lfloor\log_2k\rfloor}} = \frac{2^{1-\lfloor\log_2k\rfloor}}{(n-k)^2},\]
    as desired.
\end{proof}

To prove Theorem \ref{thm:equality_case}, assume $n$ is sufficiently large and let $k_0 \vcentcolon= \max K_n$. By Theorem \ref{thm:asymp_for_max}, it suffices to prove that $p_n(n-k_0) > p_n(n-k)$ for all $k \in K_n \setminus \{k_0\}$.
Hence, by Proposition \ref{prop:exact_order}, it is enough to show that
\begin{equation}\label{eq:final_ineq}
    \frac{k_0-k}{(n-k_0)(n-k)} + \eta(n,k_0)-\eta(n,k) \geq \frac{1}{(n-k)^2}.
\end{equation}
If $k \in \{0,1\}$, then $\eta(n,k) = 0$, so \eqref{eq:final_ineq} certainly holds. Hence, we may assume that $k \geq 2$, so in particular $\eta(n,k) \leq 1/(n-k)^2$. Since $\lcm(1,2,\ldots,k)$ divides both $n-k$ and $n-k_0$, it must divide $k_0-k$. Therefore, $k_0 - k \geq 2$, so the left-hand side  of \eqref{eq:final_ineq} is at least
\[\frac{2}{(n-k_0)(n-k)} - \frac{1}{(n-k)^2} > \frac{1}{(n-k)^2},\]
as desired.

\begin{remark}
\label{rem:sharp_asymp}
As a consequence of Theorem \ref{thm:equality_case} and Proposition \ref{prop:exact_order}, one obtains the more refined asymptotic 
\[M(n) = \frac{1}{n} + O\Bigl(\frac{\log n}{n^2}\Bigr).\]
The error term here is best possible up to constants, as can be seen by considering $n$ of the form $\lcm(1,2,\ldots,k)+k$ for $k \in \N$.
\end{remark}

\bigskip

\noindent\textbf{Acknowledgements.} This work was supported by the Croatian Science Foundation under the project number HRZZ-IP-2022-10-5116 (FANAP). The author would like to thank Rudi Mrazović for reading an earlier draft of this paper and Sean Eberhard for many useful comments and remarks.

\bibliographystyle{plain}
\bibliography{references}

\end{document}